\documentclass[10pt,a4paper]{amsproc}

\usepackage{framed}
\usepackage{graphicx}
\usepackage{float}

\usepackage[T1]{fontenc}
\usepackage[utf8]{inputenc}
\usepackage{amsmath,amssymb,amsthm,mathtools}
\usepackage[colorlinks=true,linkcolor=blue,urlcolor=blue]{hyperref}
\usepackage[margin=0.88in,footskip=0.25in]{geometry}

\numberwithin{equation}{section}
\newtheorem{theorem}{Theorem}[section]
\newtheorem{lemma}[theorem]{Lemma}
\newtheorem{proposition}[theorem]{Proposition}

\theoremstyle{definition}
\newtheorem{example}[theorem]{Example}
\theoremstyle{remark}

\DeclareMathOperator{\Ann}{ann}
\DeclareMathOperator{\Span}{span}

\title{The K\"othe conjecture via point modules}
\author{B. Greenfeld}
\author{G. King}
\author{L. Vendramin}
\address[Greenfeld]{Department of Mathematics and Statistics, CUNY Hunter College, 695 Park Avenue, New York NY 10065, USA} \email{beeri.greenfeld@hunter.cuny.edu}

\address[King]{University of Washington, Department of Mathematics, Box 354350, C-138 Padelford Hall
Seattle, WA 98195-4350 USA}
\email{gking3@uw.edu}

\address[Vendramin]{Department of Mathematics and Data Science, Vrije Universiteit Brussel, Pleinlaan 2, 1050 Brussel, Belgium}
\email{Leandro.Vendramin@vub.be}

\begin{document}

\begin{abstract}
We give a short construction refuting the K\"othe Conjecture.
The construction is based on the point module approach to the problem, 
introduced by the first-named author, Easton, Ford, and King.
\end{abstract}

\maketitle

\section{Introduction and methodology}

The K\"othe conjecture asserts that the sum of two nil left ideals of a ring is
again nil; by a theorem of Krempa~\cite{MR306251} and Amitsur~\cite{MR78345}, this is equivalent to the
assertion that $A[t]$ is Jacobson radical for every nil ring $A$. These and other equivalent 
formulations are surveyed in \cite{MR1879880}. 
A counterexample was recently found autonomously by a pre-release version of
GPT-6 Astra, running in Epoch AI's \texttt{LeanOpenProblems} harness without
human steering, and verified in Lean 4 \cite{koethe-repo}; a human-readable
account appeared shortly afterwards in \cite{ABM26}. An important antecedent to these constructions is
Smoktunowicz's groundbreaking work~\cite{MR1793911,MR2471940} on nil algebras with
non-nil polynomial extensions. The methods produced by Smoktunowicz 
have become fundamental in successive work in the field. 

Earlier, Easton, Ford, Greenfeld and King \cite{MR4976689} had proposed an
approach to the K\"othe conjecture through point modules. 
They
observed the obstruction that point modules pose to the conjecture, and
constructed graded-nil algebras admitting point modules. The paper \cite{MR4976689}
provides the foundations and the strategy  for the present note. 
The algebras constructed in \cite{MR4976689} are graded-nil, which is not sufficient to 
contradict the conjecture; the content of Theorem~\ref{thm:main} is that 
graded \emph{and} nil can be achieved. By Proposition \ref{prop:Koethe}, 
for such an algebra $A$, the polynomial ring $A[t]$ is not
Jacobson radical, hence a counterexample to the K\"othe conjecture. 

\begin{theorem}
\label{thm:main}
There exists a finitely generated, graded, nil algebra, admitting a point module, and therefore refuting the K\"othe Conjecture.
\end{theorem}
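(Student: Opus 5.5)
We construct $A=F\langle x,y\rangle/I$ for a homogeneous ideal $I$ over a countable field $F$, graded by degree in the generators. A point module is a graded cyclic module $M=\bigoplus_{n\ge 0}Fe_n$ with $A_1e_n\subseteq Fe_{n+1}$ and $A_1e_n\neq 0$. Such a module is determined by a sequence of points $p_n=(\alpha_n:\beta_n)\in\mathbb{P}^1$ with $x e_n=\alpha_n e_{n+1}$ and $y e_n=\beta_n e_{n+1}$. The existence of a point module for $A$ means exactly that $I$ is contained in the annihilator ideal $J_{\mathbf p}$ of the corresponding point module $M_{\mathbf p}$ of the free algebra. Concretely, $J_{\mathbf p}$ is spanned by the homogeneous elements $f$ of degree $d$ with $f(p_{n},\dots,p_{n+d-1})=0$ for all shifts $n$; these are multilinear evaluations along windows of the sequence. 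So the plan is to choose $\mathbf p$ and $I\subseteq J_{\mathbf p}$ simultaneously so that $A=F\langle x,y\rangle/I$ is nil. The final implication to the Köthe conjecture is then Proposition~\ref{prop:Koethe}, which we assume.

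Enumerate all elements of the free algebra without constant term as $f_1,f_2,\dots$, possible since $F$ is countable. Nilness of $A$ requires, for each $k$, some $N_k$ with $f_k^{N_k}\in I$. We build $I=\sum_k I_k$ following the Smoktunowicz-style scheme. Write $f_k=\sum_{j=1}^{d_k}f_{k,j}$ in homogeneous components. Then $f_k^{N}$ is a sum of homogeneous components of degrees between $N$ and $Nd_k$, each a sum of products of the $f_{k,j}$. It suffices to put into $I$, for each $k$, the linear span $V_k$ of all monomials in the $f_{k,j}$ of length $N_k$ sitting inside $F\langle x,y\rangle$, or a suitable ideal containing them, placed in a sparse set of degrees. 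Each component then lies in $I$ automatically.

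The constraint is that $V_k$ must be killed by the point sequence. Such an element vanishes at the window starting at $n$ exactly when some factor $f_{k,j}$ vanishes on its sub-window. So we choose $\mathbf p$ inductively. At stage $k$, the prefix $p_0,\dots,p_{m}$ is fixed and must not be disturbed, and we aim to make $J_{\mathbf p}$ contain $\langle V_k\rangle$. The key idea, borrowed from \cite{MR4976689}, is to let the points be \emph{periodic-ish} with special values such as $(1:0)$ and $(0:1)$ so that evaluation becomes monomial evaluation. Then $f(p_n,\dots)$ is simply the coefficient of a single word $w_n$ read off the sequence. A window then kills a product of homogeneous elements if, inside it, some factor's coefficient on the corresponding subword vanishes. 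We therefore need the infinite word $w=w_0w_1\cdots$ over $\{x,y\}$ to have the property that every window of length $\deg$ of a product in $V_k$ contains a subword on which the relevant factor vanishes. The cleanest way to force this is to demand that, beyond position $m_k$, the word $w$ uses only letters from a block code. For example, $w$ is a concatenation of blocks $u,v$ of length $L_k$ chosen so that no $f_{k,j}$ is supported on a concatenation of blocks in a way that survives $N_k$ times. Purely monomial points may be too rigid, since $f_{k,j}$ could have nonzero coefficient on every word. So we should generalize by allowing generic scalar points on a finite set, turning the condition into a polynomial non-vanishing / vanishing system, and use that over an infinite field a nonzero polynomial misses some point.

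The main obstacle is this combinatorial heart: ensuring infinitely many constraints can be met by one infinite sequence without exhausting freedom, which in the Smoktunowicz approach is controlled by dimension counting. A product of $N$ homogeneous elements has a large annihilator that is linear in each window variable. The window conditions are multilinear, and a product vanishes at a window if one factor vanishes on its sub-window; that factor condition cuts out a hypersurface in $(\mathbb{P}^1)^{\deg f_{k,j}}$. I would first try to show that for large $N_k$ there is a sequence extending the current prefix that hits, in every window of the required length, one of these hypersurfaces, while remaining extendable to future stages. This follows by choosing the future points on a cleverly chosen finite "trap" pattern repeated with gaps, so that later stages refine inside earlier patterns. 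Verifying compatibility of these nested patterns, so that each $I_k$ stays inside $J_{\mathbf p}$ and $A_n\neq 0$, is the step I expect to be hardest.
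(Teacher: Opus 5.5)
Your proposal has a genuine gap. The step that actually proves the theorem is that one sequence $\mathbf p$ can satisfy the vanishing conditions for every $f_k$ while never blocking later stages. You only assert this ("This follows by choosing the future points on a cleverly chosen finite trap pattern\dots") and, as you say yourself, you do not verify it. Nothing in the outline controls the real difficulty. Once a solved block is replicated further along the sequence, the replicated (prescribed) points turn later equations into retroactive constraints on earlier free points. So greedy choice fails, and you need a mechanism guaranteeing that solutions survive.

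Two of your setup choices make this harder rather than easier. First, requiring all length-$N_k$ products of the components $f_{k,j}$ to lie in $I$ is much stronger than needed. Since $\Ann(M)$ is graded, $f^N\in\Ann(M)$ only requires each homogeneous component of $f^N$ to kill $M$: one multilinear condition per degree and starting index. Your requirement instead asks that, on every window and for every composition into factors, some factor vanishes, and you give no reason this is satisfiable. Second, the monomial-point idea fails, as you note (e.g.\ in characteristic $0$ take $f$ the sum of all words of degree $\le d$). The fallback "a nonzero polynomial misses some point" is a non-vanishing tool. What is needed is the existence of common zeros of many multilinear forms, which calls for an algebraically closed field and a dimension count.

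The paper supplies four ingredients that your outline lacks.

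\emph{One equation per index.} For $f$ supported in degrees $[1,D]$ and each offset $t\in\{0,\dots,D-1\}$, impose only the $t$-th multilinearizations of the homogeneous components of $f^{N_t}$. Choose $N_t$ so that the lowest-degree one ends exactly at the first free index $a$. This yields exactly one equation $F_i$ per index $i\ge a$, involving all points up to $i$.

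\emph{A dimension count in $\mathbb{P}^2$.} With three generators, each $F_i$ becomes a hyperplane section under the Segre embedding. A free index therefore raises the dimension of the solution set by at least $2-1=1$, and each prescribed index lowers it by at most $1$. If free points outnumber prescribed ones in every initial segment after $a$ (a good prescription), the solution set is non-empty; this is Lemma~\ref{lem:dimension}. With your $\mathbb{P}^1$ a free point gains $1-1=0$, so this count leaves no slack.

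\emph{Sparse periodic replication.} Replicate the solved prefix with a huge power-of-two period. Lemma~\ref{lem:combinatorics} shows that unions of such sparse periodic prescriptions stay good, so later stages are never obstructed and earlier identities persist.

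\emph{No need to kill every window.} From $e_{t+sP}\cdot f^N=0$ for $0\le t\le D-1$, together with $e_i\cdot f\in\Span\{e_{i+1},\dots,e_{i+D}\}$, one descends from $(s+1)P$ to get $e_i\cdot f^{N+P-D}=0$ for all $i$.

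Without a replacement for these ingredients, your outline does not establish that the algebra is nil while retaining a point module.
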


The paper \cite{MR4976689} was inspired by Small and
Zelmanov \cite{MR2273992}, who first considered point modules over nil algebras. 
Rowen \cite{MR1029310} proposed an infinite-matrix approach to the K\"othe conjecture. 
It was later observed that these matrices
generate a monomial algebra \cite{BG17}; their action on infinite row vectors gives a point module.

Loosely speaking, to construct a nil algebra with a point module, one must produce
an infinite sequence of points in a projective space on which a system of
multilinear equations vanishes, each equation depending on finitely many
consecutive terms of the sequence; this is essentially what is called a 
`multilinearly recurrent' sequence in \cite{MR4976689}. We build the sequence `in windows': a
solution of the first equation is placed in every window of some fixed length, a
solution of the second in every window of a larger length, and so on. 
This is analogous to the dyadic construction in \cite[Section~3.2]{MR4976689} (see also Lemma 3.3 therein); however, in
\cite{MR4976689} only powers of homogeneous polynomials occur, and the resulting system
is easier to solve: it suffices to place a solution in every window of bounded
length.

The main new ingredient is that the system of multilinear equations can be solved 
using an inductive dimension computation. Specifically, 
a batch of previously prescribed points lowers the dimension by at most its length, 
so the zero locus is non-empty once the free factors exceed the prescribed points 
(see~Lemma~\ref{lem:dimension}). 
Its proof is an elementary Segre-type argument. Arranging the positions of the prescribed points is then done using combinatorial machinery in Lemma~\ref{lem:combinatorics}.

\subsection*{Use of AI}

The main argument proving Lemma~\ref{lem:dimension} was suggested to us by GPT-6 Astra. This lemma is analogous to the part in the proof of \cite[Theorem~3.4]{MR4976689} where the authors prove `multilinear recurrence.' 
GPT-6 Astra also suggested an earlier version of sparsity of the set of prescribed points, which we eventually developed into the current notion of a `good prescription.' It also helped us  complete the proof of the second case in Lemma~\ref{lem:combinatorics}.

The same model observed 
that it suffices for segments of length $\deg(f)$ of homogeneous elements of the
point module, with bounded gaps between them, to be annihilated by a power of
$f$, in order for a power of $f$ to annihilate the whole module (see the argument
following Step $D-1$ in Section~\ref{sec:construct}). It also proposed the natural (and necessary) extension of the argument in \cite[Proposition~6.1]{MR4976689} to Proposition \ref{prop:Koethe} in the current paper.

All of the arguments proposed by AI have been checked and rewritten by the authors, who take full responsibility for the contents of this paper.

\section{Strategy}

Our goal is to construct a positively graded nil algebra, finitely generated in degree one, which admits a point module, namely, a graded module generated in degree zero, all of whose homogeneous components are one-dimensional. Every such algebra is a counterexample to the K\"othe conjecture. In \cite{MR4976689}, we did it for graded-nil algebras: these are algebras in which every homogeneous element is nilpotent. 

Let $\Bbbk$ be a countable, algebraically closed field. Let $\mathcal{F} = \Bbbk\langle x,y,z\rangle^{+}$ be the (non-unital) free $\Bbbk$-algebra. For $f\in \mathcal{F}$, write $f=f_1+\dots+f_D$ with each $f_i$ homogeneous of degree $i$.
As in \cite{MR4976689}, we start with a point module over the free algebra
\[
M = \Bbbk e_0 \oplus \Bbbk e_1 \oplus \Bbbk e_2 \oplus \dots
\]
Such a point module corresponds to a sequence
\[
\mathbf{p} = (\mathbf{p}_0,\mathbf{p}_1,\dots) \in \left(\mathbb{P}_\Bbbk^2\right)^{\infty}
\]
where each $\mathbf{p}_i=[a_i\colon b_i\colon c_i]$ and the action of $\mathcal{F}$ on $M$ is given by $e_i \cdot x = a_i e_{i+1}, e_i \cdot y = b_i e_{i+1}, e_i \cdot z = c_i e_{i+1}$.
Given a homogeneous element $f\in \mathcal{F}$ and $j\geq 0$, its \emph{$j$-th multilinearization} $f^{(j)}$ is a multilinear function on $\left(\mathbb{P}_\Bbbk^2\right)^{\infty}$ given by (i) replacing each $x$ by $a$, $y$ by $b$, $z$ by $c$, and (ii) indexing each $a,b,c$ incrementally starting from $j$. This is best demonstrated by an example. If $f=xy+z^2$ then $f^{(0)}=a_0 b_1+c_0 c_1, f^{(1)}=a_1 b_2 + c_1 c_2$, etc.
Note that $e_j\cdot f = f^{(j)}(\mathbf{p}) e_{j+\deg(f)}$. Hence, if $f^{(j)}(\mathbf{p})=0$ for all $j\geq 0$ then $M$ becomes a point module over $\mathcal{F}/\langle f \rangle$. 

Our strategy is therefore as follows: enumerate all (not necessarily homogeneous) elements in $\mathcal{F}$, say, $f_1,f_2,\dots$, and for each $i$, suppose $f_i$ is supported on degrees $[1,D_i]$; choose $N_i\gg 1$ such that when writing $g_i := f_i^{N_i} = g_{i,N_i}+\dots+g_{i,D_i N_i}$, all the multilinearizations $g_{i,t}^{(j)},\ N_i\leq t\leq D_i N_i,\ j\geq 0$, vanish on $\mathbf{p}$. The challenge, of course, is to construct such $\mathbf{p}$.

\section{Combinatorics on words}

% Transcription of handwritten page 1. No indexing or notation repairs.
We are trying to construct a sequence
\[
 \mathbf{p}_0,\mathbf{p}_1,\mathbf{p}_2,\dots
 \quad\in\left(\mathbb{P}_{\Bbbk}^2\right)^\infty.
\]

A \emph{prescription} is a choice of some of the points in the
sequence. A point that was not prescribed is \emph{free}.
We denote by $a_0$ the number of the first batch of prescribed coordinates, then by $b_1$ the length
of the next free batch, then by $a_1$ the length of the prescribed batch following it, etc.

A \emph{good prescription} is one for which any sequence $\{d_i\}_{i=a_0}^{\infty}$ following the rules below has $d_i\geq 1$ for all $i$: we require $d_{a_0} \geq 1$ (first free index); that if $i+1$ is a free index then $d_{i+1}\geq d_i+1$; and that if $i$ is free and $i+1$ is prescribed (in the batch of length $a_j$), then
$d_{i+r}\geq d_i-r$ for all $1\leq r\leq a_j$.

While this definition seems a bit implicit, it has the following convenient equivalent form; nevertheless, we prefer the definition above as we will use it as is in the construction below. A prescription is good if and only if 
\[
 b_1-a_1\geq1;\qquad b_1+b_2-a_1-a_2\geq1;\qquad\text{etc.}
\]

\begin{lemma}
\label{lem:combinatorics}
Given $L_1,L_2,\dots$ powers of $2$, we can find $P_1,P_2,\dots$ such that: (i) $P_i$ depends on $L_1,\dots,L_i$ and on $P_1,\dots,P_{i-1}$, (ii) $P_i$ is a power of $2$ and can be enlarged if needed, and (iii) every prescription in indices
\[
\mathcal{P}_k := \{i\in\mathbb{Z}_{\geq 0}\mid i\bmod P_1<L_1\}\cup \dots \cup \{i\in\mathbb{Z}_{\geq 0}\mid i\bmod P_k<L_k\}
\]
is good, for every $k$.
\end{lemma}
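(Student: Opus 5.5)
The plan is to pick the $P_i$ nested and very sparse, and then check goodness through the equivalent partial-sum form: after the initial prescribed batch, every prefix ending at the end of a prescribed batch has strictly more free indices than prescribed ones. Concretely, I will take $P_0:=1$ and define inductively
\[
P_i := \text{a power of } 2 \text{ with } P_i \geq 2^{i+3}\,(L_1+\dots+L_i) \text{ and } P_{i-1}\mid P_i .
\]
This satisfies (i) and (ii), since any larger power of $2$ also works. It also gives $L_i/P_i\leq 2^{-i-3}$ and $L_i\leq P_i/8$. Because the $P_i$ are nested, $\mathcal{P}_k$ is periodic with period $P_k$, and its pattern on $[0,P_j)$ for $j<k$ is repeated inside every period of $P_j$. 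That is the self-similarity I intend to exploit.

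\textbf{Step 1 (density bound).} For an interval $I$ of length $\ell$, the number of level-$i$ prescribed indices in $I$ is at most $(\ell/P_i+1)L_i$. Moreover, this bound improves to $(\ell/P_i)L_i$ whenever the interval starts right after a free index and does not cut a level-$i$ block on its left. Summing gives at most $\ell\sum_i 2^{-i-3}\leq \ell/8$ prescribed indices, up to boundary terms. Whenever the boundary terms are at most about $\ell/4$, free indices dominate. So goodness reduces to controlling boundary effects.

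\textbf{Step 2 (structure of the runs).} I will identify the prescribed runs. A maximal prescribed run is obtained by taking a top-level block $[mP_j, mP_j+L_j)$, where $j$ is the largest level with a block starting there. Lower-level blocks starting inside or adjacent to it then merge onto it. Since $\sum_{i<j}L_i\leq P_j/8$, and any lower-level merge extends the run by at most a bounded chain, a run of top level $j$ has length at most $L_j+\sum_{i<j}L_i$. I will prove this by induction on $j$, checking that lower levels cannot chain indefinitely because each $L_i\leq P_i/8$. The initial run $a_0$ is the run at $0$, whose top level is $k$.

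\textbf{Step 3 (prefix inequality).} Fix a run ending at $n$, and let $j$ be the largest level whose block meets $(a_0,n]$. I then split $[a_0,n]$ into complete periods of $P_j$ plus a leftover. By Step 1 and periodicity, each complete period of $P_j$ contributes free minus prescribed $\geq P_j/2$. The main obstacle is the leftover piece just after $a_0$, in the case $a_0\geq P_j$ with $j<k$. There the window starts mid-period of level $j$, right after a long run.

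To handle it I will use the nesting $P_j\mid P_{j'}$ for all $j'>j$. Because of this nesting, the run at $0$ ends at $a_0$, and from $a_0$ up to the next multiple of $P_j$ the pattern is a copy of $[a_0 \bmod P_j, P_j)$ in the level-$\leq j$ pattern. In that copy the only prescribed indices are lower-level blocks, which satisfy density $\leq 1/8$ locally, once more by the same induction. Hence this leftover piece also has nonnegative free-minus-prescribed balance. Since the free index $a_0$ itself adds the required $+1$, the prefix inequality holds. If this inductive bookkeeping breaks down, my fallback is to verify goodness directly from the $d_i$ rules by induction on the level $j$, using the periodic block structure.
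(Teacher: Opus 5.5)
There is a genuine gap, and one symptom points to it: your argument never uses the hypothesis that the $L_i$ are powers of $2$. It only uses that the $P_i$ are nested and sparse. Without that hypothesis the lemma is false. Take $L_1=1$, $P_1=16$, $L_2=15$, $P_2=512$; these meet all your requirements on the $P_i$. Yet in $\mathcal{P}_2$ the indices $0,\dots,14$ are prescribed, $15$ is free and $16$ is prescribed, so $b_1-a_1=0$. With $L_2=31$, $P_2=1024$ the same failure occurs with $a_0=31\ge P_1$, which is your ``main obstacle'' case.

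The step that breaks is the end of Step 3. After the leftover $[a_0,mP_j)$, the window contains the level-$j$ run starting at $mP_j$, of length up to $L_j+\sum_{i<j}L_i$. If $n$ lies in that same period, no complete period of $P_j$ is there to pay for it; you also never account for this final partial period. So ``the leftover has nonnegative balance and $a_0$ contributes $+1$'' is not enough. You need the leftover's balance to exceed the length of that run. That is, you need a quantitative lower bound on $mP_j-a_0$ saying the initial run cannot end just below a multiple of $P_j$. Your local density estimate only gives a balance of order $mP_j-a_0$, which is useless when $a_0$ sits just below $mP_j$. Steps 1--2 give no control over $a_0 \bmod P_j$.

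The paper supplies exactly this structural input, and this is where powers of $2$ enter. Put $L=\max_{i\le k}L_i$. Then $[0,L)\subseteq\mathcal{P}_k$, so $a_0\ge L$, and every $P_i$ either divides $L$ or is at least $2L$. Let $r$ be the largest index with $P_r\le L$, so $r<k$. Then $\mathcal{P}_k\cap[L,2L)=L+(\mathcal{P}_r\cap[0,L))$. Hence for $N<2L$ the window $[a_0,N]$ is a translate of a window of $\mathcal{P}_r$ starting at its first free index, and the inequality follows from goodness of $\mathcal{P}_r$ by induction on $k$.

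For $N\ge 2L$ a single global count $|\mathcal{P}_k\cap[0,N]|\le L+N\sum_i L_i/P_i$ suffices. The initial run of length $a_0\ge L$ absorbs the boundary term $L$, and your bound $\sum_i L_i/P_i\le 1/8$ is already enough there. You can either replace Step 3 by this two-range argument, or use the same self-similarity to derive the missing lower bound on $mP_j-a_0$ inside your period decomposition.
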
 

\begin{proof}
We may assume that 
\[
P_k \geq P_{k-1}, 4(L_1+\dots+L_k),2^{k+10}L_k
\]
for all $k$. We claim that this choice of $P_1,P_2,\dots$ makes $\mathcal{P}_k$ good, for every $k$. We work by induction, assuming this claim holds for all $i\leq k$,  aiming to prove it for $k+1$. (The base case is straightforward.) Let $a$ be the first free index with respect to $\mathcal{P}_{k+1}$ (that is, the smallest non-negative integer outside it). 
Notice that our choice makes $\sum_{i=1}^{\infty} \frac{L_i}{P_i} < 0.01$. 
Let $L=\max\{L_1,\dots,L_{k+1}\}$.
Now given any $N\geq a$, we need to count how many prescribed indices appear within $[a,N]$; let $p(N):=|\mathcal{P}_{k+1}\cap [0,N]|$.

Notice that all initial blocks of prescribed points are contained in $[0,L-1]$. Beyond that, we have $L_j$ more consecutive prescribed points at the starting positions $P_j,2P_j,\dots$; hence there are at most $\lfloor \frac{N}{P_j} \rfloor$ such blocks that start at or before $N$. Hence by a union bound, 
\[
p(N)\leq L+L_1 \left\lfloor \frac{N}{P_1} \right\rfloor+\dots+L_{k+1} \left\lfloor \frac{N}{P_{k+1}} \right\rfloor \leq L + 0.01 N.
\]
In particular, $p(2L)\leq 1.02 L < 2L$, so the first free index $a$ must appear before $2L$. 

Now if $N\geq 2L$, the number of prescribed indices in $[a,N]$ is at most $p(N)-L\leq 0.01 N$, so the number of free points in $[a,N]$, which is at least $N-p(N)\geq 0.99 N - L \geq 0.49 N$, is larger than the number of prescribed indices in $[a,N]$.

To complete the picture, suppose that $a<N<2L$ ($N=a$ is obvious, since $a$ is the first free index).
Suppose that $r$ is the last index for which $P_1,\dots,P_r\leq L$ (or formally $0$ if even $P_1>L$); recall that $P_1\leq\dots\leq P_{k+1}$ and $P_{k+1}>L$, so $r\leq k$. If $j>r$ then $P_j>L$ so $P_j\geq 2L$ (recall these are all powers of two), and so 
\begin{equation} \label{eq:4}
\mathcal{P}_{k+1}\cap [L,2L-1] = \mathcal{P}_r\cap [L,2L-1]. 
\end{equation}
But for each $j\leq r$, since $P_j$ divides $L$ (being a smaller power of two), 
\begin{equation} \label{eq:5}
x\in \mathcal{P}_r\cap [L,2L-1]\ \ \iff\ \ x-L\in \left(\mathcal{P}_r \cap [0,L-1]\right).
\end{equation}
In particular, the first free index of $\mathcal{P}_r$ is $a-L$. 
By the induction hypothesis, 
\[
|\mathcal{P}_r\cap [a-L,N-L]| < |[a-L,N-L]\setminus \mathcal{P}_r|
\]
so by \eqref{eq:5}, 
$|\mathcal{P}_r\cap [a,N]| < |[a,N]\setminus \mathcal{P}_r|$. By \eqref{eq:4}, we can replace $\mathcal{P}_r$ by $\mathcal{P}_{k+1}$ in the last inequality, completing the proof.
\end{proof}

\section{Solving multilinear equations}

Suppose we have a prescription $\mathcal{P}$ where the first $a_0$ points $\mathbf{p}_0,\dots,\mathbf{p}_{a_0-1}$ are prescribed, the next $b_1$ points $\mathbf{p}_{a_0},\dots,\mathbf{p}_{a_0+b_1-1}$ are free, the $a_1$ following them are prescribed, and so on. 
Suppose that we are given multilinear equations
\[
 F_t(\mathbf{p}_0,\dots,\mathbf{p}_t)
\]
for $a_0\leq t\leq M$. By this we mean that $F_t$ is a linear combination of monomials of the form
$X_0X_1\cdots X_t$ with each $X_j\in\{a_j,b_j,c_j\}$. For $j$ a prescribed index, we replace $X_j$ by the value of the prescribed point $\mathbf{p}_j$. Thus $F_t$ becomes a function on
\[
(\mathbb{P}^2)^{\#\text{free points }\leq t}=\left(\mathbb{P}^2\right)^{|[0,t]\setminus \mathcal{P}|}
\]
and, in fact, under its Segre
embedding into some $\mathbb{P}^N$, a \emph{linear} equation (as all free factors are involved).

\begin{example} 
Suppose $\mathbf{p}_0,\mathbf{p}_2,\mathbf{p}_4,\dots$
are prescribed. Our equations take the forms
\[
 F_2=\alpha a_1+\beta b_1+\gamma c_1,\ \ \ 
 F_3=\alpha a_1a_3+\alpha'a_1b_3+\alpha''a_1c_3+
 \beta b_1a_3+\beta'b_1b_3+\beta''b_1c_3+\dots
\]
\end{example}

\begin{lemma}
\label{lem:dimension}
If the prescription $\mathcal{P}$ is good, we can find
points $\mathbf{p}_0,\dots,\mathbf{p}_M$, agreeing with those already prescribed, such that $F_{a_0}(\mathbf{p})=\dots=F_M(\mathbf{p})=0$.
\end{lemma}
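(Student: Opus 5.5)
We prove the lemma by a dimension count in the Segre picture, proceeding through the indices $t=a_0,a_0+1,\dots,M$. Write $Z_t\subseteq(\mathbb{P}^2)^{|[0,t]\setminus\mathcal{P}|}$ for the common zero locus of $F_{a_0},\dots,F_t$ on the free points $\le t$. The prescribed points are substituted once and for all. The target is that $Z_t$ is non-empty, with every irreducible component of dimension at least $d_t$. Here $d_t$ is a number satisfying exactly the recursion in the definition of a good prescription. Goodness then gives $d_t\ge 1$ for all $t$, hence $Z_M\neq\emptyset$, and any point of $Z_M$ together with the prescribed points gives the required $\mathbf{p}_0,\dots,\mathbf{p}_M$. (If $a_0=0$ so that $\mathbf{p}_0$ is free, one uses the obvious harmless shift of indices.)

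\textbf{Free step.} Suppose $t+1$ is free. Then $Z_{t+1}$ is cut out of $Z_t\times\mathbb{P}^2$ by the single equation $F_{t+1}$. The new variable $\mathbf{p}_{t+1}$ enters every earlier equation trivially. The equation $F_{t+1}$ is multihomogeneous of multidegree $(1,\dots,1)$ in the free factors, so it is the pullback of a hyperplane section under the Segre embedding. Each component of $Z_t\times\mathbb{P}^2$ has dimension at least $d_t+2$. Intersecting with one hypersurface lowers dimension by at most one. The intersection is non-empty because $\dim\ge 1$ and a hyperplane meets every positive-dimensional projective variety under a closed embedding. This gives components of dimension $\ge d_t+1$, matching the rule $d_{t+1}\ge d_t+1$. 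The base case $t=a_0$ is the same computation: $\mathbb{P}^2$ cut by one linear form has dimension $\ge1$.

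\textbf{Prescribed batch.} Suppose $t$ is free and $t+1,\dots,t+a_j$ are prescribed. No new factor appears, so for $1\le r\le a_j$ the set $Z_{t+r}$ is $Z_{t+r-1}$ intersected with one more Segre hyperplane section $\{F_{t+r}=0\}$. Each section lowers dimension by at most one and is non-empty as long as the current dimension is at least $1$. Induction on $r$ gives components of dimension $\ge d_t-r$, matching $d_{t+r}\ge d_t-r$. The step does need the dimension before it to be $\ge 1$ for non-emptiness. This is guaranteed, since goodness says that any sequence obeying these rules stays $\ge 1$, and we may take $d$ to be the minimal sequence obeying them.

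\textbf{Main obstacle.} The delicate point is the case where $F_{t+r}$ vanishes identically after substitution, or where a hyperplane misses a component. Identical vanishing only helps, because the dimension is then not lowered. Missing cannot happen, since a hyperplane meets any positive-dimensional subvariety of $\mathbb{P}^N$. So the real care lies in formulating the inductive statement uniformly: \emph{every} component of $Z_t$ has dimension $\ge d_t$, and $Z_t\neq\emptyset$. This uses Krull's principal ideal theorem on each irreducible component, applied to the Segre image, which is a closed embedding of the product.
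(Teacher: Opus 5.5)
Your proposal is correct and follows essentially the paper's argument. Each $F_t$ becomes a hyperplane section under the Segre embedding, a free index raises the dimension by one, each prescribed index lowers it by at most one, and goodness keeps the dimension $\geq 1$ up to $t=M$. You also make explicit two points the paper leaves implicit: the induction that ensures non-emptiness at every stage (a hyperplane meets any positive-dimensional projective variety), and the equation-by-equation bound $d_{t+r}\geq d_t-r$ inside a prescribed batch, where the paper only states the endpoint bound $\dim V_{i+a_j}\geq \dim V_i-a_j$.
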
 

\begin{proof}
For each $i\geq a_0$, let $V_i \subseteq \left(\mathbb{P}^2\right)^{i+1}$ be the zero locus of $F_{a_0},\dots,F_i$ and all prescribed points among $\mathbf{p}_0,\dots,\mathbf{p}_i$.
For instance,
\[
 V_{a_0}\subseteq
 \underset{\mathbf{p}_0}{\mathbb{P}^2}\times\cdots\times
 \underset{\mathbf{p}_{a_0-1}}{\mathbb{P}^2}\times
 \underset{\mathbf{p}_{a_0}}{\mathbb{P}^2}
\]
is a linear subspace of
$\{\mathbf{p}_0\}\times\cdots\times\{\mathbf{p}_{a_0-1}\}\times\mathbb{P}^2$,
as it is given by a single linear equation on
$\mathbf{p}_{a_0}$. 
Thus $\dim V_{a_0}\geq1$. 
Now if $i+1$ is a free index ($i+1\not\in \mathcal{P}$) then
\[
 \dim V_{i+1}\geq\dim V_i+\dim\mathbb{P}^2-1=\dim V_i+1
\]
since $V_{i+1}$ is the intersection of $V_i \times \mathbb{P}^2$ with a single hypersurface given by a multilinear equation involving all factors so far.
And if $i$ is free and $i+1,\dots,i+a_j$ are a  prescribed batch then
\[
 V_{i+a_j}=V_i\times\{\mathbf{p}_{i+1}\}\times\cdots\times
 \{\mathbf{p}_{i+a_j}\}\cap
 \{F_{i+1}=\cdots=F_{i+a_j}=0\},
\]
but each $F_{i+1},\dots,F_{i+a_j}$ is a multilinear equation on all free points so far, involving coordinates from \emph{all} free factors; each
one of them thus becomes a \emph{linear} equation under the Segre
embedding, as discussed above.
Hence
\[
 \dim V_{i+a_j}\geq\dim V_i-a_j.
\]
By the goodness of the prescription, $\dim V_t\geq 1$ for all $t\geq a_0$, so we can take any $(\mathbf{p}_0,\dots,\mathbf{p}_M)\in V_M$.
\end{proof}

\begin{center}
\begin{figure}[H]
\includegraphics[width=0.65\linewidth]{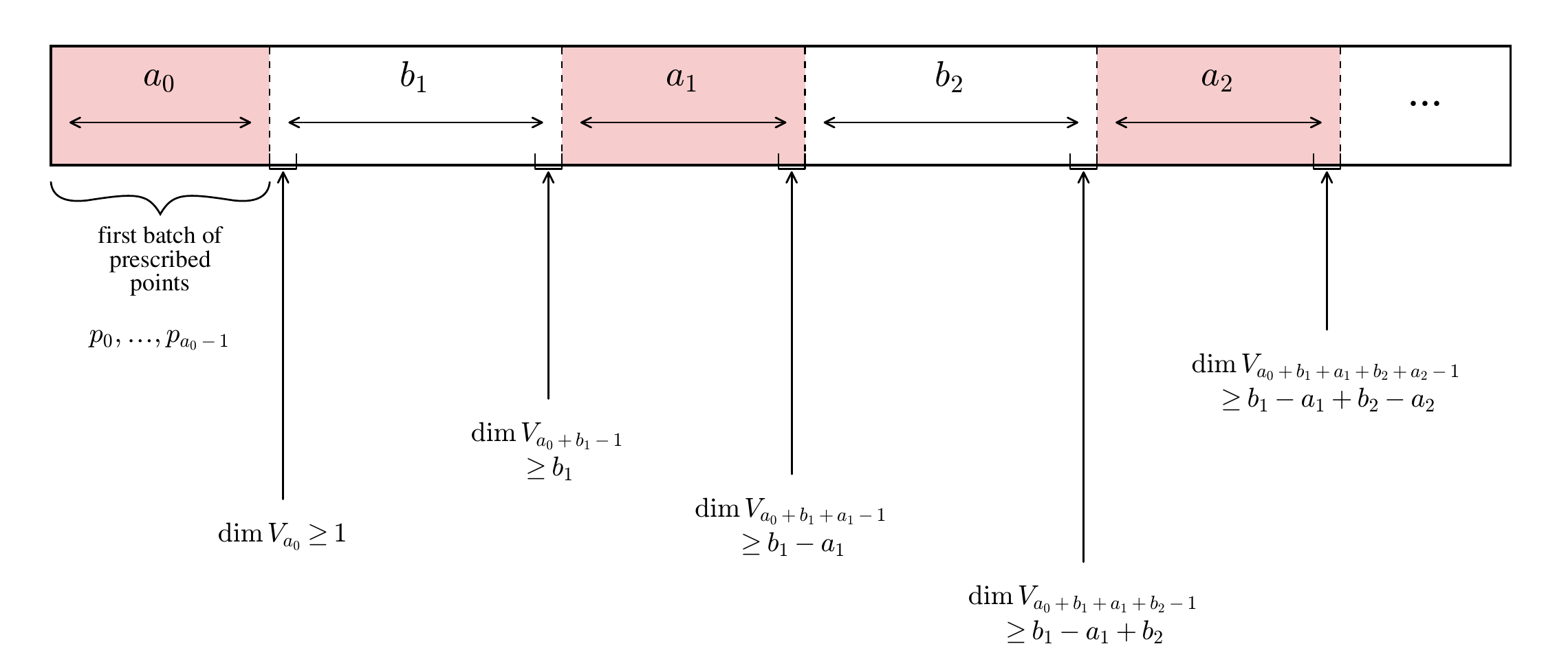}
\caption{An illustration of Lemma~\ref{lem:dimension}.}
\end{figure}
\end{center}

\section{Constructing \texorpdfstring{$\mathbf{p}$}{p}} 
\label{sec:construct}

We are now ready to construct $\mathbf{p}=(\mathbf{p}_0,\mathbf{p}_1,\dots)$
inductively. Suppose that we are given a good prescription where prescribed
points are in indices
\[
 \{i\in\mathbb{Z}_{\geq0}\mid i\bmod P_1<L_1\}\cup \dots \cup \{i\in\mathbb{Z}_{\geq0}\mid i\bmod P_k<L_k\}.
\]

Let $f\in\Bbbk\langle x,y,z\rangle^+$ be given, $f$ supported on degrees
$[1,D]$. We will apply Lemma~\ref{lem:dimension}, $D$ times. We may assume that the first $D$ indices are prescribed (if necessary, apply first Lemma~\ref{lem:combinatorics} by enlarging the collection of prescribed points).

\emph{Step 0:} Suppose $a$ is the first free index; we take $N_0=a+1$. Consider
$f^{N_0}=g_{N_0}+\cdots+g_{DN_0}$ (each $g_j$ is homogeneous of degree $j$) and let
\[
F_{a}(\mathbf{p}_0,\dots,\mathbf{p}_{a})=g_{N_0}^{(0)}(\mathbf{p}_0,\dots,\mathbf{p}_{a}),\ \dots,\ F_{DN_0-1}(\mathbf{p}_0,\dots,\mathbf{p}_{D N_0-1})=g_{DN_0}^{(0)}(\mathbf{p}_0,\dots,\mathbf{p}_{D N_0-1})
\]
be their $0$-th multilinearizations. 
By Lemma~\ref{lem:dimension}, we can refine the prescription by further
prescribing $\mathbf{p}_0,\dots,\mathbf{p}_{D N_0 - 1}$, such that
$F_{a}=\cdots=F_{DN_0-1}=0$.

Take $L_{k+1}$ a power of two such that $L_{k+1} \geq DN_0$. 
By Lemma~\ref{lem:combinatorics}, we can take $P_{k+1}$ very large, replicate
$\mathbf{p}_0,\dots,\mathbf{p}_{L_{k+1}-1}$ every $P_{k+1}$ points (that is, prescribe $\mathbf{p}_{j+sP_{k+1}}=\mathbf{p}_j$, $0\leq j\leq L_{k+1}-1$, $s\geq 1$)
and this will still be a good prescription. Notice that, at the level of the module $M$,
\[
 e_0\cdot f^{N_0}=e_{P_{k+1}}\cdot f^{N_0}
 =e_{2P_{k+1}}\cdot f^{N_0}=\cdots=0.
\]

\emph{Step $1\leq t\leq D-1$:} By now, as a result of the previous step, we have a prescription
\[
 \{i\in\mathbb{Z}_{\geq0}\mid
 i\bmod P_1<L_1\} \cup \dots \cup \{i\in\mathbb{Z}_{\geq0}\mid i\bmod P_{k+t}<L_{k+t}\}.
\]
Let $a$ be the index of the first free point and take $N_t=a+1-t$; write $f^{N_t}=g_{N_t}+\cdots+g_{DN_t}$, a sum of homogeneous components 
(not to be confused with the $g_j$'s from Step 0). We let
\begin{align*}
F_{\underbrace{N_t+t-1}_{=a}}(\mathbf{p}_t,\dots,\mathbf{p}_{\underbrace{N_t+t-1}_{=a}}) & =g_{N_t}^{(t)}(\mathbf{p}_t,\dots,\mathbf{p}_{\underbrace{N_t+t-1}_{=a}}) \\ & \vdots  \\ F_{DN_t+t-1}(\mathbf{p}_t,\dots,\mathbf{p}_{D N_t+t-1}) & =g_{DN_t}^{(t)}(\mathbf{p}_t,\dots,\mathbf{p}_{D N_t+t-1})
\end{align*}
the $t$-th multilinearizations of the homogeneous terms.

By Lemma~\ref{lem:dimension}, we can refine the prescription by further
prescribing $\mathbf{p}_0,\dots,\mathbf{p}_{D N_t + t - 1}$, such that
$F_{a}=\cdots=F_{DN_t + t -1}=0$.

Take $L_{k+t+1}$ a power of two such that $L_{k+t+1} \geq DN_t+t$. 
By Lemma~\ref{lem:combinatorics}, we can take $P_{k+t+1}$ very large, replicate
$\mathbf{p}_0,\dots,\mathbf{p}_{L_{k+t+1}-1}$ every $P_{k+t+1}$ points (that is, prescribe $\mathbf{p}_{j+sP_{k+t+1}}=\mathbf{p}_j$, $0\leq j\leq L_{k+t+1}-1$, $s\geq 1$)
and this will still be a good prescription. 
At the level of the module $M$, we just ensured that
\[
 e_t\cdot f^{N_t}=e_{t+P_{k+t+1}}\cdot f^{N_t}
 =e_{t+2P_{k+t+1}}\cdot f^{N_t}=\cdots=0.
\]
After Step $D-1$, we have a good prescription and:
\begin{equation} \label{eq:1}
e_{t+sP_{k+D}}\cdot f^{\max\{N_0,\dots,N_{D-1}\}}  = 0 \ \text{for all}\  0\leq t\leq D-1,\  s \geq 0\ \text{(by Step $t$)}
\end{equation}
% \[
%  \begin{aligned}
%  e_{sP_{k+1}}\cdot f^{N_0}&=0&&\forall\ s\geq0\quad\text{(by Step 0)}\\
%  e_{1+sP_{k+2}}\cdot f^{N_1}&=0&&\forall\ s\geq0\quad\text{(by Step 1)}\\
%  &\vdots&&\\
%  e_{D-1+sP_{k+D}}\cdot f^{N_{D-1}}&=0&&\forall\ s\geq0\quad\text{(by Step $D-1$)}.
%  \end{aligned}
% \]
Take $P:=\max\{P_1,\dots,P_{k+D}\}$ (recall they are all powers of $2$; and $P\geq D$ as the first $D$ points are assumed to be prescribed) and $N:=\max\{N_0,\dots,N_{D-1}\}$, by \eqref{eq:1}:
\begin{equation} \label{eq:2}
 e_{t+sP}\cdot f^N=0\qquad\forall\ 0\leq t\leq D-1,\ s\geq0.
\end{equation}
The next (and final) step is to show that for \emph{all} $t\geq0$, $e_t\cdot f^{N+P-D}=0$.
We know it for $t\in [sP,sP+D-1]$ ($s\geq 0$) and need to prove for all $sP+D\leq t\leq(s+1)P-1$. Now
\begin{equation} \label{eq:3}
 e_{(s+1)P-1}\cdot f
 \in\Span_{\Bbbk}\{e_{(s+1)P},e_{(s+1)P+1},\dots,e_{(s+1)P+D-1}\} \overset{\eqref{eq:2}}{\subseteq} \Ann(f^N)
\end{equation}
so $e_{(s+1)P-1}\in\Ann(f^{N+1})$. Next,
\[
 \begin{aligned}
 e_{(s+1)P-2}\cdot f
 \in\Span_{\Bbbk}\{e_{(s+1)P-1},e_{(s+1)P},\dots,e_{(s+1)P+D-2}\} \overset{\eqref{eq:2},\eqref{eq:3}}{\subseteq} \Ann(f^{N+1})
 \end{aligned}
\]
so $e_{(s+1)P-2}\in\Ann(f^{N+2})$.
In this manner, eventually,
\[
e_{sP+D} = e_{(s+1)P-(P-D)} \in\Ann(f^{N+P-D}).
\]
We conclude that some power of $f$ annihilates the entire point module. % {\color{red}Enumerate $\mathcal F$ and repeat the construction for every element.
% At each substep the prescribed prefix extends beyond the previous
% first free index, since $DN_t+t\geq a+1$. Thus every coordinate
% is eventually fixed. Earlier prescriptions remain unchanged,
% so all previously obtained annihilation identities persist.

% Set $I=\Ann_{\mathcal F}(M)$ and $A=\mathcal F/I$.
% The two-sided ideal $I$ is homogeneous: if
% $h=\sum_d h_d\in I$, with each $h_d$ homogeneous, then
% $0=e_jh=\sum_d e_jh_d$ has summands in distinct degrees,
% so every $h_d$ annihilates $M$. Moreover, for every
% $f\in\mathcal F$, the construction gives an exponent $E$
% such that $Mf^E=0$, hence $f^E\in I$. Therefore $A$ is
% nil, positively graded, and generated in degree one by the
% images of $x,y,z$. Since each $\mathbf p_j$ has a nonzero
% coordinate, $M_jA_1=M_{j+1}$ for every $j$, so $M$ is
% a point module over $A$.}
Repeating this with all $f\in\Bbbk\langle x,y,z\rangle^+$
(arbitrarily enumerated), we obtain a nil, graded quotient of $\Bbbk\langle x,y,z\rangle^+$, 
admitting a point module.

\section{The K\"othe Conjecture}

To complete the picture, we need the following argument, which appeared in \cite{MR4976689} in the case of two generators.

\begin{proposition} \label{prop:Koethe}
Let $A$ be a positively graded algebra, finitely generated in degree one. Suppose that $A$ is nil and admits a point module. Then $A[t]$ is not Jacobson radical.
\end{proposition}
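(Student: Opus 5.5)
We argue by contradiction: assume $A[t]$ is Jacobson radical, and use the point module to produce an element of $A[t]$ that is not quasi-invertible. The natural candidate is $u := (x+y+z)t$, or more generally $u := \ell t$ for a suitable degree-one element $\ell \in A_1$ which acts nontrivially at every step of the point module. Since $A[t]$ is Jacobson radical, $u$ is quasi-invertible. In the unitization, $1-u$ is then invertible in $A^1[t]$ with inverse $1+v$, $v\in A[t]$. Formally $(1-\ell t)^{-1} = \sum_{n\geq 0}\ell^n t^n$ in $A^1[[t]]$. Inverses in $A^1[[t]]$ are unique and $A^1[t]\subseteq A^1[[t]]$, so $1+v$ must equal this series. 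Hence $\ell^n = 0$ for all large $n$ — which is automatic since $A$ is nil, so this alone gives no contradiction.

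Therefore I would instead exploit the grading, as in \cite[Proposition~6.1]{MR4976689}. Write $A=\bigoplus_{n\geq1}A_n$, and consider the graded-by-degree element $u=(x+y+z)t$ in $A[t]$, or rather any element of the form $u = \sum_i \lambda_i x_i\, t$. The key reformulation: $A[t]$ is Jacobson radical iff $A[t]$ contains no element whose image acts invertibly-obstructively on some simple module. I would build such a module from the point module $M$ by "twisting" with $t$. Concretely, consider $M[t,t^{-1}]$, or better $\widehat{M} = \prod_i \Bbbk e_i$ with $A[t]$ acting by $e_i\cdot a t^m = (e_i a)$ for homogeneous $a$, and specialize $t\mapsto \lambda$ scalars. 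The cleaner route is the standard one: $A[t]$ Jacobson radical implies, via the Krempa–Amitsur equivalence, that $J(A[t]) = I[t]$ with $I$ nil, and by Bergman-type grading results the radical of $A[t]$ is homogeneous with respect to the $\mathbb{Z}$-grading $\deg(a t^m)=\deg a$. So it suffices to find an element $r \in A[t]$ such that $1 - r$ is not invertible in $A^1[t]$.

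For this I take $r = \ell t$ with $\ell = \alpha x+\beta y+\gamma z$, choosing $\ell$ (using that $\Bbbk$ is infinite and countable, while there are only countably many points $\mathbf{p}_i$) such that $\ell^{(i)}(\mathbf{p}) \neq 0$ for every $i$, i.e.\ $e_i\ell = \mu_i e_{i+1}$ with all $\mu_i\neq 0$. Such $\ell$ exists since it must avoid countably many lines in $\Bbbk^3$. The inverse of $1-\ell t$, if it lies in $A^1[t]$, equals $\sum_{n\le K}\ell^n t^n$ for some $K$. This forces $\ell^n=0$ for $n>K$, contradicting $e_0\ell^n = \mu_0\cdots\mu_{n-1}e_n\neq0$.

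The main obstacle is precisely this tension: nilness of $A$ makes $\ell$ nilpotent, so the naive argument collapses. The fix I would pursue is to use non-homogeneous combinations, $r = \ell_1 t + \ell_2 t^2 + \cdots$, or more simply $r = x + \ell t$ mixing degrees. Then the coefficient of $t^n$ in $(1-r)^{-1}$ is a sum over many words. I would show, via the point module, that $e_0$ times this coefficient is a nonzero combination of $e_m$'s for infinitely many $n$ — computing its $e_m$-component as an explicit nonzero polynomial in the $\mu$'s for generic choices. Making this genericity argument rigorous, again exploiting countability of $\Bbbk$ against countably many polynomial conditions, is where I expect the real work.
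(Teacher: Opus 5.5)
Your proposal has a genuine gap. It never reaches a contradiction, and the one concrete step it carries out is false. In your third paragraph you pick $\ell\in A_1$ with $e_i\ell=\mu_i e_{i+1}$ and all $\mu_i\neq 0$. Then $e_0\ell^n=\mu_0\cdots\mu_{n-1}e_n\neq 0$ for every $n$, so $\ell$ is not nilpotent, which contradicts that $A$ is nil. So such an $\ell$ never exists under the hypotheses.

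Your justification also fails on its own terms. Over a countable field, $\Bbbk^3$ is itself countable, so it can be a union of countably many planes; countability works against you. Your closing plan (expanding $(1-x-\ell t)^{-1}$ in powers of $t$, plus genericity against countably many polynomial conditions) is left undone, and it has the same defect. Moreover, with a fixed $\ell$, the relevant factor $a_j+\lambda\,\ell^{(j)}(\mathbf{p})$ vanishes identically in $\lambda$ whenever $a_j=0$ and $\ell^{(j)}(\mathbf{p})=0$, and nothing prevents the sequence $\mathbf{p}$ from hitting such a point.

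The missing idea is to put the parameter inside a single homogeneous element of $A[t]$, so that you get one \emph{uniform} nilpotency index for a whole family.
\begin{itemize}
\item Grade $A[t]$ by $A$-degree with $\deg t=0$, and take $r=x_1+tx_2+\dots+t^{m-1}x_m$, which is homogeneous of degree one.
\item Redo your own first-paragraph computation with respect to this grading rather than $t$-adically. If $(1-r)(1+v)=1$ with $v\in A[t]$, comparing homogeneous components gives $v_n=r^n$. Hence $r^N=0$ once $N$ exceeds the top degree of $v$. This is the graded-nil property of the Jacobson radical that the paper cites from Bergman.
\item Specialize $t\mapsto\lambda$. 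Since $t$ is central, this gives $(x_1+\lambda x_2+\dots+\lambda^{m-1}x_m)^N=0$ for \emph{every} $\lambda\in\Bbbk$, with the same $N$.
\item Act on $e_0$. This yields $\prod_{j=0}^{N-1}(c_{j,1}+\lambda c_{j,2}+\dots+\lambda^{m-1}c_{j,m})=0$ for all $\lambda$.
\end{itemize}
Now only finitely many conditions are involved. Each factor is a nonzero polynomial in $\lambda$, because the point-module property forces $(c_{j,1},\dots,c_{j,m})\neq 0$. So the product cannot vanish on the infinite field $\Bbbk$; the paper phrases this step via a Vandermonde argument. The independent powers $t^{i-1}$ on the generators are exactly what guarantee that no factor vanishes identically.
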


\begin{proof}
Let $M=\bigoplus_{i=0}^{\infty} \Bbbk e_i$ be a point module over $A$, and suppose that $e_i \cdot x_j = c_{i,j} e_{i+1}$.
Suppose that $A[t]$ is Jacobson radical. Extend the grading from $A$ to $A[t]$ by $\deg(t)=0$. Let $x_1,\dots,x_m$ be degree-one generators. Recall that the Jacobson radical of an $\mathbb{N}$-graded algebra is graded-nil \cite{Bergman}, so $x_1+t x_2+\dots+t^{m-1} x_m$ is nilpotent, say, of index $N$.

% Extend scalars to some infinite field extension $\mathbb{K}/\Bbbk$, $\tilde{M}=\bigoplus_{i=0}^{\infty} \mathbb{K} e_i$ an $\tilde{A}:=\mathbb{K}\otimes_\Bbbk A$-module. 
Hence for every $\lambda\in \Bbbk$, we have $(x_1+ \lambda x_2+\dots+\lambda^{m-1} x_m)^N=0$, so, acting with it on $e_0$,
\[
(c_{0,1}+ \lambda c_{0,2}+\dots+\lambda^{m-1} c_{0,m})(c_{1,1}+ \lambda c_{1,2}+\dots+\lambda^{m-1} c_{1,m})\dots (c_{N-1,1}+ \lambda c_{N-1,2}+\dots+\lambda^{m-1} c_{N-1,m}) = 0
\]
and hence, since $\Bbbk$ is infinite, there is some $0\leq j\leq N-1$ such that for infinitely many $\lambda$'s, 
\[
c_{j,1}+ \lambda c_{j,2}+\dots+\lambda^{m-1} c_{j,m} = 0.
\]
By a Vandermonde argument (using any distinct $m$ scalars $\lambda_1,\dots,\lambda_m$), we see that $c_{j,1}=\dots=c_{j,m}=0$, contradicting that $M$ is a point module: it cannot be generated in degree zero, as $e_j\cdot x_1=\dots=e_j\cdot x_m=0$ so $e_j\cdot A=0$ and thus $e_{j+1}\notin e_0\cdot A$, a contradiction.
\end{proof}

%\begin{thebibliography}{99}

% \bibitem{ABM26}
% Adamczewski...

% \bibitem{Bergman}
% Jacobson radical is graded-nil

% \bibitem{MR4976689}
% Easton, Ford, Greenfeld, King

% \bibitem{MR2273992}
% On point modules

% \bibitem{Smoktunowicz}
% Smoktunowicz, polynomial rings over nil rings

%\end{thebibliography}

\bibliographystyle{abbrv}
\bibliography{refs}

\end{document}